\def\undersetbrace#1\to#2{\underbrace{#2}_{#1}}
\def\oversetbrace#1\to#2{\overbrace{#2}^{#1}}
\def\AMSunderset#1\to#2{\underset{#1}{#2}}
\def\AMSoverset#1\to#2{\overset{#1}{#2}}
\def\East#1#2{-\raisebox{0.1pt}{$\mkern-16mu\frac{\;\;#1\;}{\;\;#2\;}\mkern-16mu$}\to}
\newtheorem{proposition}[subsection]{Proposition}
\newtheorem*{proposition*}{Proposition}
\newtheorem{theorem}[subsection]{Theorem}
\newtheorem*{theorem*}{Theorem}
\newtheorem*{maintheorem*}{Main Theorem}
\newtheorem{lemma}[subsection]{Lemma}
\newtheorem*{lemma*}{Lemma}
\newtheorem*{corollary*}{Corollary}
\theoremstyle{definition}
\newtheorem*{remark*}{Remark}
\def\ign#1{}             
\def\o{\circ}
\def\X{\mathfrak X}
\def\al{\alpha}
\def\be{\beta}
\def\ga{\gamma}
\def\ep{\varepsilon}
\def\et{\eta}
\def\la{\lambda}
\def\rh{\rho}
\def\ta{\tau}
\def\ph{\varphi}
\def\ps{\psi}
\def\om{\omega}
\def\Ph{\Phi}
\def\Om{\Omega}
\def\i{^{-1}}
\def\x{\times}
\def\p{\partial}
\let\on=\operatorname
\def\Diff{\on{Diff}}
\def\Dens{\on{Dens}}
\def\R{{\mathbb R}}
\newcommand{\sr}[1]%
{\ifmmode{}^\dagger\else${}^\dagger$\fi\ifvmode
\vbox to 0pt{\vss
 \hbox to 0pt{\hskip\hsize\hskip1em
 \vbox{\hsize3cm\raggedright\pretolerance10000
 \noindent #1\hfill}\hss}\vss}\else
 \vadjust{\vbox to0pt{\vss%
 \hbox to 0pt{\hskip\hsize\hskip1em%
 \vbox{\hsize3cm\raggedright\pretolerance10000%
 \noindent #1\hfill}\hss}\vss}}\fi%
}
\begin{document}
\title[]
{Moser's theorem on manifolds with corners
}
\author{Martins Bruveris, Peter W. Michor, Adam Parusi\'nski, Armin Rainer}

\address{
Martins Bruveris: 
Department of Mathematics, 
Brunel University London, Ux\-bridge, UB8 3PH, United Kingdom}
\email{martins.bruveris@brunel.ac.uk}

\address{
Peter W.\ Michor: Fakult\"at f\"ur Mathematik,
Universit\"at Wien, Os\-kar-Mor\-gen\-stern-Platz 1, A-1090 Wien, Austria.}
\email{peter.michor@univie.ac.at}

\address{
Adam Parusi\'nski: Univ. Nice Sophia Antipolis, CNRS, LJAD, UMR 7351, 06108 Nice, France}
\email{adam.parusinski@unice.fr}

\address{Armin Rainer: Fakult\"at f\"ur Mathematik, Universit\"at Wien,  Oskar-Morgenstern-Platz~1, A-1090 Wien, Austria}
\email{armin.rainer@univie.ac.at}

\date{{\today}} 

\thanks{Supported by the Austrian Science Fund (FWF), Grant P~26735-N25, and 
by a BRIEF Award from Brunel University London.}
\keywords{manifolds with corners, Moser's theorem, Stokes' theorem}
\subjclass[2010]{Primary 53C65, 58A10} 

\begin{abstract} 
Moser's theorem \cite{Moser65} 
states that the diffeomorphism group of a compact manifold acts transitively on the space of all smooth positive densities with fixed volume. Here we describe the extension of this result to manifolds with corners. In particular we obtain Moser's theorem on simplices. The proof is based on Banyaga's paper  \cite{Banyaga74}, where Moser's theorem is proven for manifolds with boundary. 
A cohomological interpretation of Banyaga's operator is given, which allows a proof of Lefschetz duality using differential forms.
\end{abstract}

\maketitle

\subsection{Introduction}
In \cite{Moser65} Moser proved that on a connected compact oriented manifold $M$ without boundary there exists for any two positive volume forms $\mu_0$ and $\mu_1$ with $\int_M\mu_0=\int_M\mu_1$ 
an orientation preserving diffeomorphism $\ph$ with $\ph^*\mu_1=\mu_0$.
In \cite{Banyaga74} Banyaga extended this to compact oriented manifolds with boundary and showed that the diffeomorphism can be chosen such that it restricts to the identity on the boundary. 
On a manifold with corners one cannot expect the diffeomorphism to be the identity on the boundary, since at a corner $x$ of index 2 or higher the derivative of such a diffeomorphism would have to be the identity: in this case $x$ lies in the boundary of at least two codimension 1 strata of $\p M$. So the derivative at $x$ restricted to two codimension 1 subspaces is the identity and thus it has to be the identity on the whole space; in particular the Jacobian determinant there equals 1.

Moser's theorem on manifolds with corners is needed for example in \cite{BMR16}. Even on simplices it does not seem to be known, but is highly desirable.
In fact, Banyaga's method \cite{Banyaga74} gives just the desired result. But this is not immediately obvious and it took us a long time to realize it. Therefore we think that it is worthwhile to write the proof with all details. Along the way we also prove Stokes' theorem on manifolds with corners.

For related results see \cite{DacorognaMoser90} for a version of Moser's theorem on bounded domains in $\R^m$ with low differentiability requirements furnishing diffeomorphisms with only low regularity using PDE techniques; this does not imply the result given here. See also the recent book \cite{CsatoDacorognaKneuss12} for results on $k$-forms instead of volume forms. 
A version on non-compact manifolds is in \cite{GreeneShiohama79} which also sketches a proof for non-compact manifolds with boundary which differs from Banyaga's proof.

\subsection{Manifolds with corners alias quadrantic (orthantic) manifolds}
For more information we refer to \cite{DouadyHerault73}, \cite{Michor80}, \cite{Melrose96}.
Let $Q=Q^m=\mathbb R^m_{\ge 0}$ be the positive orthant or quadrant. By Whitney's extension theorem or Seeley's theorem,
the restriction $C^{\infty}(\mathbb R^m)\to C^{\infty}(Q)$ is a surjective continuous linear mapping which admits a continuous linear section (extension mapping); so $C^{\infty}(Q)$ is a direct summand in $C^{\infty}(\mathbb R^m)$. A point $x\in Q$ is called a \emph{corner of codimension (or index)} $q>0$ if $x$ lies in the intersection of $q$ distinct coordinate hyperplanes. Let $\p^q Q$ denote the set of all corners of codimension $q$.

A manifold with corners (recently also called a quadrantic manifold) $M$ 
is a smooth manifold modeled on open subsets of $Q^m$.
We assume that it is connected and second countable; then it is paracompact and each open cover admits a subordinated smooth partition of unity. Any manifold with corners $M$ is a submanifold with corners of an open manifold $\tilde M$ of the same dimension, and each smooth function on $M$ extends to a smooth function on $\tilde M$. We do not assume that $M$ is oriented, but for Moser's theorem we will eventually assume that $M$ is compact. 
Let $\p^q M$ denote the set of all corners of codimension $q$. Then 
$\p^q M$ is a submanifold without boundary of codimension $q$ in $M$; 
it has finitely many 
connected components if $M$ is compact. 
We shall consider $\p M$ as stratified into the connected components of all $\p^q M$ for $q > 0$. 
Abusing notation we will call $\p^q M$ the boundary stratum of codimension $q$; this will lead to no confusion. Note that $\p M$ itself is not a manifold with corners.
We shall denote by $j_{\p^q M}:\p^q M\to M$ the embedding of the boundary stratum of codimension $q$ into $M$, and by $j_{\p M}:\p M\to M$ the whole complex of embeddings of all strata.

Each diffeomorphism of $M$ restricts to a diffeomorphism of $\p M$ and to a diffeomorphism of each stratum $\p^q M$. The Lie 
algebra of $\Diff(M)$ consists of all vector fields $X$ on $M$ such that $X|\p^q M$ is tangent to 
$\p^q M$. We shall denote this Lie algebra by $\X(M,\p M)$.

\subsection{Differential forms}
There are several differential complexes on a manifold with corners.  
If $M$ is not compact there are also the versions with compact support. 
\begin{itemize}
\item Differential forms that vanish near $\p M$. If $M$ is compact, this is the same as
the differential complex $\Om_c(M\setminus \p M)$ of differential forms with compact support 
in the open interior $M\setminus \p M$. 
\item $\Om(M,\p M) = \{\al\in \Om(M): j_{\p^q M}^*\al =0 \text{ for all } q\ge 1\}$, the complex of differential forms that pull back to 0 on each boundary stratum. 
\item $\Om(M)$, the complex of all differential forms. Its cohomology equals 
singular cohomology with real coefficients of $M$, since $\mathbb R\to \Om^0\to \Om^1\to \dots$
is a fine resolution of the constant sheaf on $M$; for that one needs existence of smooth partitions of unity and the Poincar\'e lemma which holds on manifolds with corners.
The Poincar\'e lemma can be proved as in \cite[9.10]{Mic2008} in each quadrant.
\end{itemize}
If $M$ is an oriented manifold with corners of dimension $m$ and if $\mu\in \Om^m(M)$ is a nowhere vanishing form of top degree, then $\X(M)\ni X\mapsto i_X\mu\in \Om^{m-1}(M)$ is a linear isomorphism. 
Moreover, $X\in \X(M,\p M)$ (tangent to the boundary) if and only if $i_X\mu\in\Om^{m-1}(M,\p M)$.

\subsection{Towards the long exact sequence of the pair $(M,\p M)$} 
Let us consider the short exact sequence of differential graded algebras
$$
0\to \Om(M,\p M) \to \Om(M) \to \Om(M)/\Om(M,\p M)\to 0\,.
$$
The complex $\Om(M)/\Om(M,\p M)$ is a subcomplex of the product  of $\Om(N)$ for all connected components $N$ of all 
$\p^q M$. The quotient consists of forms which extend continuously over boundaries to $\p M$ with its induced topology in such a way that one can extend them to smooth forms on $M$; this is contained in the space of `stratified forms' as used in \cite{Valette15}. There Stokes' formula is proved for stratified forms.

\begin{proposition}[Stokes' theorem] \label{Stokes}
For a connected oriented manifold $M$ with corners of dimension $\dim(M)=m$ and for any $\om\in\Om^{m-1}_c(M)$ we have
$$
\int_M d\om = \int_{\p^1M} j_{\p^1 M}^*\om\,.
$$
\end{proposition}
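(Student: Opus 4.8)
The plan is to localize by a partition of unity and thereby reduce the statement to the fundamental theorem of calculus in a single quadrant chart.

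First I would choose a locally finite atlas $(u_\al\colon U_\al\to V_\al\subseteq Q^m)$ of orientation preserving charts (possible since $M$ is oriented) together with a subordinate smooth partition of unity $(\rho_\al)$, which exists by the standing assumptions on $M$. Since $\om$ has compact support, only finitely many of the forms $\rho_\al\,\om$ are nonzero, so $\om=\sum_\al\rho_\al\,\om$ and $d\om=\sum_\al d(\rho_\al\,\om)$ are finite sums; as both sides of the asserted identity are additive in $\om$, it suffices to treat a single $\om$ whose compact support lies in one chart domain $U_\al$. The transition maps of the atlas are orientation preserving, and because the corner stratification is invariant under diffeomorphisms they carry faces to faces, compatibly with the induced (outward-normal-first) orientations; hence, pushing $\om$ forward by $u_\al$, we are reduced to the model assertion: for $\om\in\Om^{m-1}_c(Q^m)$ one has $\int_{Q^m}d\om=\int_{\p^1Q^m}j_{\p^1Q^m}^*\om$, where $Q^m$ carries the standard orientation and $\p^1Q^m=\bigcup_{i=1}^m F_i^\circ$ with $F_i^\circ=\{x\in Q^m: x^i=0,\ x^j>0\text{ for }j\ne i\}$ its induced orientation. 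Note that no ``interior'' boundary contribution can occur here, precisely because $\rho_\al\,\om$ has compact support inside the open set $U_\al$.

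For the model case write $\om=\sum_{i=1}^m f_i\,dx^1\wedge\dots\wedge\widehat{dx^i}\wedge\dots\wedge dx^m$ with $f_i\in C^\infty_c(Q^m)$, so that $d\om=\sum_{i=1}^m(-1)^{i-1}\,\p_if_i\;dx^1\wedge\dots\wedge dx^m$. By Fubini's theorem and the fundamental theorem of calculus in the variable $x^i\in[0,\infty)$, using that $f_i$ has compact support (so the term at $x^i=+\infty$ vanishes),
$$
\int_{Q^m}\p_if_i\;dx\;=\;-\int_{\R^{m-1}_{\ge0}}f_i(x^1,\dots,0,\dots,x^m)\;dx^1\cdots\widehat{dx^i}\cdots dx^m .
$$
On the closed face $F_i=\{x^i=0\}\cap Q^m$ every summand of $\om$ but the $i$-th one restricts to $0$, since it contains $dx^i$; and the outward pointing normal along $F_i$ is $-\p_i$, so $F_i$ carries the orientation $i_{-\p_i}(dx^1\wedge\dots\wedge dx^m)=(-1)^i\,dx^1\wedge\dots\wedge\widehat{dx^i}\wedge\dots\wedge dx^m$. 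Hence $j_{F_i}^*\om$ integrates over $F_i$ to $(-1)^i\int_{\R^{m-1}_{\ge0}}f_i(x^1,\dots,0,\dots,x^m)\,dx^1\cdots\widehat{dx^i}\cdots dx^m$, which by the display above is exactly the $i$-th summand of $\int_{Q^m}d\om$. Summing over $i$ gives $\int_{Q^m}d\om=\sum_i\int_{F_i}j^*\om$. Finally $F_i\setminus F_i^\circ$ consists of corners of codimension $\ge 2$ and is therefore Lebesgue-null inside the hyperplane $\{x^i=0\}$, so $\int_{F_i}j^*\om=\int_{F_i^\circ}j^*\om$, and summing yields $\int_{\p^1Q^m}j_{\p^1Q^m}^*\om$.

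I do not expect a genuine obstacle; the argument is essentially bookkeeping, and its only delicate points are the two that distinguish corners from a boundary. First, $\p M$ is not itself a manifold with corners, so the theorem is phrased purely in terms of the honest manifold $\p^1M$, and one must check that the lower strata $\p^qM$ with $q\ge 2$ genuinely drop out — this happens because they meet each codimension-one coordinate hyperplane in a set of measure zero, which is where the corners are disposed of. Second, one must pin down the induced orientation of the faces by the outward-normal-first convention and verify that it is preserved by the (orientation-preserving, stratification-preserving) chart changes, so that the local identities glue. Everything else — smoothness of the extension by zero, finiteness of the sums, and well-definedness of $\int_{\p^1M}j_{\p^1 M}^*\om$ for compactly supported $\om$ — is routine.
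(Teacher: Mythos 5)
Your argument is correct, but it takes a different route from the paper. The paper does not reprove Stokes' theorem from scratch: it deduces the corner version from the known case of manifolds with boundary (citing \cite[10.11]{Mic2008}) by excising the codimension~$\ge 2$ corners. Concretely, a cutoff function $g$ equal to $1$ near $\p^{\ge2}M$ with support in an $\ep$-neighbourhood is built from a radial profile $f$ in quadrant charts, chosen so that $\bigl|\int_M dg\wedge\om\bigr|<\ep$; then $(1-g)\om$ has compact support in the manifold with boundary $M\setminus\p^{\ge2}M$, Stokes' formula applies there, and letting $\ep\to0$ recovers the identity since $\p^{\ge2}M$ is a null set and the error terms are $O(\ep)$. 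Your proof instead is the direct, self-contained one: partition of unity, reduction to a compactly supported form on the model quadrant $Q^m$, Fubini plus the fundamental theorem of calculus in each variable, a sign check with the outward-normal-first orientation of the faces, and the observation that $F_i\setminus F_i^\circ$ is Lebesgue-null in the hyperplane, so the strata $\p^qM$, $q\ge2$, contribute nothing. All the steps you flag as delicate (sign conventions, charts preserving the stratification and the induced face orientations, absolute convergence of the face integrals even when components of $\p^1M$ are non-compact) are indeed the right ones and are handled correctly; this is essentially the ``lengthy full proof'' the paper delegates to \cite{Conrad}. What each approach buys: yours is elementary and does not presuppose the boundary case; the paper's cutoff argument is shorter given that case as a black box and, as the authors note, extends verbatim to manifolds with more general singular strata (e.g.\ cone-like singularities), since it only uses that the bad set has codimension~$2$ in $M$ and codimension~$1$ in $\p^1M$, not the local quadrant structure.
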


Note that $\p^1M$ may have several components. Some of these might be non-compact.

We shall deduce this result from Stokes' formula for a manifold with boundary by making precise the fact 
that $\p^{\ge 2} M$ has codimension 2 in $M$ and has codimension 1 with respect to $\p^1 M$. The proof also works for manifolds with more general boundary strata, like manifolds with cone-like singularities.
A lengthy full proof can be found in \cite{Conrad}.

\begin{proof}
We first choose a smooth decreasing function $f$ on $\mathbb R_{\ge 0}$ 
such that $f=1$ near 0 and $f(r)=0$ for $r\ge \ep$.
Then $\int_0^\infty f(r)dr <\ep$ and for $Q^m=\R^m_{\geq 0}$ with $m \geq 2$,
\begin{align*}
	\Big|\int_{Q^m} f'(|x|) \, dx\Big| &= C_m \Big|\int_0^\infty f'(r) r^{m-1} \, dr \Big| = 
	C_m \Big|\int_0^\infty f(r) (r^{m-1})' \, dr \Big|
	\\
	&= C_m  \int_0^\ep f(r) (r^{m-1})' \, dr \le C_m \ep^{m-1}\,, 
\end{align*}
where $C_m$ denotes the surface area of $S^{m-1} \cap Q^m$.
Given $\om\in\Om^{m-1}_c(M)$ we use the function $f$ on quadrant charts on $M$ to construct a function 
$g$ on $M$ that is 1 near $\p^{\ge2}M = \bigcup_{q\ge2}\p^q M$, has support close to $\p^{\ge 2}M$ and satisfies $\left| \int_M dg \wedge \om \right| < \ep$. Then $(1-g)\om$ is an $(m-1)$-form with compact support in the manifold 
with boundary $M\setminus \p^{\ge 2}M$, and Stokes' formula (cf.\ \cite[10.11]{Mic2008}) now says
$$
\int_{M\setminus \p^{\ge 2}M} d((1-g)\om) = \int_{\p^1M} j_{\p^1 M}^*((1-g)\om)\,.
$$
But $\p^{\ge 2}M$ is a null set in $M$ and the quantities
$$
\Big| \int_M d((1-g)\om) - \int_M d\om\Big|  \quad\text{ and }\quad
\Big| \int_{\p^1M} j_{\p^1 M}^*((1-g)\om) - \int_{\p^1M} j_{\p^1 M}^*\om\Big| 
$$
are small if $\ep$ is small enough.
\end{proof}

\begin{lemma}\label{lemBanyaga} 
Let $M$ be an oriented connected manifold with corners of dimension $\dim(M) = m$.
For each form $\om \in \Om^m_c(M\setminus\p M)$ with $\int\om=1$ there exists a continuous linear operator 
$$
I^\om : \Om^m_c(M) \to \Om^{m-1}_c(M,\p M) \quad\text{ such that: }
$$
\begin{itemize}
\item $d \,I^\om(\al) = \al - \om \int\al$  for all $\al\in \Om^m_c(M)$.
\item If $\al$ vanishes on $\p^{\ge 2}M$, i.e., $\al_x=0$ for all $x\in \p^{\ge 2}M$, then $I^\om(\al)$ vanishes on $\p M$.
\end{itemize}
\end{lemma}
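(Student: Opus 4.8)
The plan is to assemble $I^\om$ from local operators on quadrant charts by a partition of unity, using a ``transport of mass'' along a tree of charts for the cohomological bookkeeping. Since $\int(\al-\om\int\al)=0$, it suffices to construct, for every $\ga\in\Om^m_c(M)$ with $\int\ga=0$, a continuous linear $\be(\ga)\in\Om^{m-1}_c(M,\p M)$ with $d\be(\ga)=\ga$ and with $\be(\ga)$ vanishing on $\p M$ whenever $\ga$ vanishes on $\p^{\ge2}M$ (note that $\om$, being supported in $M\setminus\p M$, plays no role in this last hypothesis). We may moreover take $\om$ to be a single bump supported in a chart $U_0\subseteq M\setminus\p M$: a general $\om$ differs from such an $\om'$ by $d\xi$ with $\xi\in\Om^{m-1}_c(M\setminus\p M)$, and passing from $I^{\om'}(\al)$ to $I^{\om'}(\al)+\xi\int\al$ affects neither continuity nor the boundary behaviour.

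\emph{The local step.} The heart of the argument is the analogous statement for an open subset $V$ of the model quadrant $Q^m$: for every $\ga\in\Om^m_c(V)$ with $\int\ga=0$ there is $K_V\ga\in\Om^{m-1}_c(V,\p V)$, continuous and linear in $\ga$, with $dK_V\ga=\ga$ and with $K_V\ga$ vanishing on $V\cap\p Q^m$ whenever $\ga$ vanishes on $V\cap\p^{\ge2}Q^m$. I would prove this by induction on the number $k$ of coordinate hyperplanes of $Q^m$ that meet $V$: the case $k=0$ is the compactly supported Poincar\'e lemma on $\R^m$, and the case $k=1$ is essentially Banyaga's lemma for manifolds with boundary \cite{Banyaga74} (the hypothesis then being vacuous). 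For the inductive step one splits off a quadrant direction meeting $V$, say the last one, writes $Q^m=Q^{m-1}\times[0,\infty)_{x_m}$ and $\ga=g\,dx_1\wedge\dots\wedge dx_m$, forms the fibre integral $h\in\Om^{m-1}_c(Q^{m-1})$, $\int h=\int\ga=0$, subtracts from $\ga$ a correction form pulled back from $Q^{m-1}$ (its primitive being furnished by the inductive hypothesis applied to $h\,dx_1\wedge\dots\wedge dx_{m-1}$) which has the same $x_m$-fibre integral as $\ga$, and finally antidifferentiates the remainder in $x_m$ so that the resulting primitive is compactly supported and vanishes at $x_m=0$. The delicate point---which I expect to be the main obstacle---is that the naive correction, a bump in $x_m$ times $h$, destroys the vanishing of the $x_m$-antiderivative on the \emph{other} faces $\{x_i=0\}$, $i<m$; to repair this the correction must be chosen to agree with $g$ along all those faces, and this is possible precisely because, under the hypothesis, the traces $g|_{\{x_i=0\}}$ are divisible by $\prod_{j\ne i}x_j$ (a Hadamard-type division). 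It is this that ultimately makes every coefficient of $K_V\ga$ divisible by $x_1\cdots x_m$, i.e.\ makes $K_V\ga$ vanish on $V\cap\p Q^m$ as a form, in the conditional case; without the hypothesis one obtains only the pull-back condition $K_V\ga\in\Om^{m-1}_c(V,\p V)$.

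\emph{The global step.} Choose a locally finite atlas $(U_i,u_i)_i$ with $u_i(U_i)\subseteq Q^m$ convex and $\overline{U_i}$ compact, a subordinate partition of unity $(\chi_i)$, and, using connectedness of $M$, a tree structure on the index set with root $0$ and with $U_i\cap U_{p(i)}\ne\emptyset$ for $i\ne0$. For $i\ne0$ fix a top form $\om_i$ of integral $1$ supported in $U_i\cap U_{p(i)}$ and vanishing on $\p^{\ge2}M$ (possible, as $\p^{\ge2}M$ has codimension $\ge2$), and set $\om_0=\om$. Then $\om_i-\om_{p(i)}$ is supported in the single chart $U_{p(i)}$, has integral $0$, and vanishes on $\p^{\ge2}M$, so the local step provides $\et_i\in\Om^{m-1}_c(M,\p M)$ with $d\et_i=\om_i-\om_{p(i)}$ and $\et_i$ vanishing on $\p M$; summing these along the path from $i$ to the root yields $\ze_i\in\Om^{m-1}_c(M,\p M)$ with $d\ze_i=\om_i-\om$ and $\ze_i$ vanishing on $\p M$. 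For $\al\in\Om^m_c(M)$ set $\al_i=\chi_i\al$; as $\al_i-\om_i\int\al_i$ is supported in $U_i$, has integral $0$, and vanishes on $\p^{\ge2}M$ whenever $\al$ does, the local step gives $\be_i\in\Om^{m-1}_c(M,\p M)$ with $d\be_i=\al_i-\om_i\int\al_i$ and $\be_i$ vanishing on $\p M$ whenever $\al$ does. Setting
$$
I^\om(\al):=\sum_i\be_i+\sum_i\Big(\int\al_i\Big)\ze_i
$$
(a finite sum for each $\al$) gives a continuous linear operator into $\Om^{m-1}_c(M,\p M)$ with
$$
dI^\om(\al)=\sum_i\Big(\al_i-\om_i\int\al_i\Big)+\sum_i\Big(\int\al_i\Big)(\om_i-\om)=\sum_i\al_i-\om\sum_i\int\al_i=\al-\om\int\al,
$$
and, when $\al$ vanishes on $\p^{\ge2}M$, every $\be_i$ and every $\ze_i$ vanishes on $\p M$, hence so does $I^\om(\al)$. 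Thus everything reduces to the inductive step of the local statement, which is the only genuinely hard part.
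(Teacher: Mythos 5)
Your global assembly is fine and is a genuinely different route from the paper's: you globalize by the standard \v{C}ech/tree ``transport of mass'' (forms $\om_i$ in chart overlaps, primitives of $\om_i-\om_{p(i)}$ summed along the tree), whereas the paper instead rigs the atlas so that every chart contains a fixed interior chart $U_0$ (composing charts with flows of vector fields that vanish on $\p M$), so that one single $\om$ serves in all charts and no inter-chart bookkeeping is needed. Both work; one small caveat in yours is that the local operator must produce primitives \emph{supported in the chart image} $V$, which the fibre-integration construction does not give for an arbitrary convex $V\subseteq Q^m$ (the $x_m$-antiderivative spreads support along the $x_m$-direction); this is repaired by taking charts onto entire partial quadrants $\R^p_{\ge0}\x\R^{m-p}$, as the paper does. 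Likewise your reduction to forms of integral zero and the change of $\om$ by $\xi\int\al$ are unobjectionable.

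The genuine gap is exactly at the point you flag as ``the main obstacle'', and it is the heart of the lemma. The unconditional statements of your local step (primitive, compact support, vanishing pullbacks to all faces) do follow from the de~Rham fibre-integration induction. But the conditional \emph{pointwise} vanishing fails for the $x_m$-antiderivative $\int_0^{x_m}\bigl(g-c(t)h\bigr)\,dt$ on the other faces $\{x_i=0\}$, $i<m$, and your proposed repair --- replace $c(t)h(x')$ by a correction agreeing with $g$ along those faces, justified by Hadamard divisibility of the traces --- is not carried out and does not close the induction as stated: once the correction is no longer a product of a bump in $x_m$ with a form pulled back from $Q^{m-1}$, its primitive is no longer furnished by the inductive hypothesis in dimension $m-1$, and you are facing precisely the problem you set out to solve (a compactly supported top form of integral zero on $Q^m$ needing a primitive with the same conditional boundary vanishing). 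The divisibility of $g|_{\{x_i=0\}}$ by $\prod_{j\ne i}x_j$ is true but beside the point: what is needed is a primitive of the modified correction with compact support, zero pullback to all faces, and pointwise vanishing under the hypothesis. The paper supplies a concrete mechanism for exactly this step, of which your sketch has no analogue: write $\al=\be+\ga$ with $\be=h(u^m)\,\al_1(0)\wedge du^m$, so that $\ga$ has vanishing trace at $\{u^m=0\}$ and the straightforward recursion handles it pointwise, and treat $\be$ after the permutation $\rh$ of the last two coordinates, setting $I^\om_m(\al)=\tilde I^\om_m(\ga)+\rh^*\tilde I^\om_m(\rh^*\be)$; a short case check (using that $\be$ vanishes on $\{u^i=0\}$ for $i<m$ and $\rh^*\be$ on $\{u^m=0\}$ when $\al$ vanishes on $\p^{\ge2}Q^m_m$) then gives the second bullet. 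Until you produce such a device, the second item of the lemma --- the part that makes Moser's theorem on manifolds with corners go through --- remains unproved in your argument.
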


For a compact oriented manifold with boundary, this is due to Banyaga \cite{Banyaga74}. We call $I^\om$ the \emph{Banyaga operator}.

\begin{proof}
	We first construct $I^\om_m$ for the case when $M$ is a 
partial quadrant $Q^m_p := \R^p_{\geq 0} \x \R^{m-p} = \{x\in \mathbb R^m\,:\, x^1\ge 0,\dots,x^p\ge 0\}$.

We construct $I^\om_m$ by induction on the dimension $m$ and start with $I^\om_0=0$. We shall use a 
smooth function $g$ with compact support in $\mathbb R_{>0}$ and $\int g(u)du =1$. 

For $Q=Q^1_1 = \mathbb R_{\ge 0}$ let $\om=g(u)du$.  
Then for $\al=a(u)du\in \Om^1_c(Q)$ we put
\begin{align*}
 I_1^{\om}(\al)(u) :&= \int_0^u\Big( a(t) - g(t)\int_Q a(v)dv\Big)dt\, \quad\text{ so that }
\\
d I^\om_1(\al) &= \al - \om \int\al \quad \text{ and  }\quad  I^\om_1(\al)(0) = 0\,,
\end{align*}
and thus $I^\om_1(\al)\in \Om^0(\mathbb R_{\ge 0},\{0\})$.
For $Q=Q^1_0=\mathbb R$ we just integrate from $-\infty$ to $u$. Note that $I^\om_1(\al)(u)$ vanishes for large $u$, 
so it has compact support. Thus $I^\om_1$ has all desired properties.

For general $Q=Q^m_p = \mathbb R_{\ge 0}^p\x \mathbb R^{m-p}$ we shall use:
\begin{align*}
\om &= g(u^1)du^1\wedge \dots\wedge  g(u^m) du^m  =: \om'\wedge g(u^m)du^m\,,
\\
d : & ~\Om^{m-1}(Q)\to\Om^{m}(Q),\quad d = \sum_{i=1}^{m-1}du^i\wedge \p_{u^i} + du^m\wedge \p_{u^m} =: d' + du^m\wedge \p_{u^m}\,.
\end{align*}
Any form $\al\in\Om^m_c(Q^m_p)$ can be written as $\al=\al_1(u^m)\wedge du^m$ for a smooth curve 
$$
\al_1:
\begin{cases}
\mathbb R\to \Om^{m-1}_c(Q^{m-1}_p) &\quad\text{ if }\quad 0\le p\le m-1\,,
\\
\mathbb R_{\ge 0}\to \Om^{m-1}_c(Q^{m-1}_{p-1})&\quad\text{ if }\quad p=m\,.
\end{cases}$$ 
Following an idea of de~Rham \cite{deRham61} used by \cite{Banyaga74}, we define the auxiliary operator 
\begin{multline*}
\tilde I^\om_m(\al) = I^{\om'}_{m-1}(\al_1(u^m))\wedge du^m +
\\
+ (-1)^{m-1}  \om' \cdot
\begin{cases}
\int_{-\infty}^{u^m} \Big( \int_{Q^{m-1}_p}\al_1(t)- g(t)\int_{Q^m_p}\al\Big)\,dt &\quad\text{ if }\quad 0\le p\le m-1\,,
\\
\int_0^{u^m} \Big( \int_{Q^{m-1}_{p-1}}\al_1(t)- g(t)\int_{Q^m_p}\al\Big)\,dt &\quad\text{ if }\quad p=m\,,
\end{cases}
\end{multline*}
which is in $\Om^{m-1}(Q^m_p,\p Q^m_p)$: The first summand by induction on $m$ and because it contains $du^m$.
The second summand since the integral starts at $0$ in the relevant case.  $\tilde I^\om_m(\al)$ has compact support: The first summand by induction, and the second summand since $\om'$ has compact support in the first $m-1$ variables, and since the integral vanishes for large $u^m$.

The exterior derivative of $\tilde I^\om_m(\al)$: For the first summand we get  
\begin{align*}
d\big(I^{\om'}_{m-1}(\al_1(u^m))\wedge du^m\big) &= d' I^{\om'}_{m-1}(\al_1(u^m)) \wedge du^m =
\\&
= \Big(\al_1(u^m) -\om' \int_{Q^{m-1}} \al_1(u^m)\Big) \wedge du^m
\quad\text{ by induction}
\\&
=\al - (\om'\wedge du^m)\int_{Q^{m-1}}\al_1(u^m)\,.
\end{align*}
The exterior derivative of the second summand is
\begin{align*}
&(-1)^{m-1}du^m\wedge \om'\Big(\int_{Q^{m-1}}\al_1(u^m)-g(u^m)\int_{Q^m}\al\Big)
\\&
=(\om'\wedge du^m) \int_{Q^{m-1}}\al_1(u^m) - (\om'\wedge g(u^m)du^m) \int_{Q^m}\al 
\end{align*}
which proves $d\tilde I^\om_m(\al) = \al - \om\int_{Q^m}\al$.

Let $h : \R \to \R$ be a smooth function equal to $1$ for $t\le 1$ and to $0$ for $t\ge 2$.
Then we define 
$$
I^\om_m(\al) = 
\begin{cases} \tilde I^\om(\al) &\quad\text{ if }\quad 0\le p\le m-1\,,
\\
\tilde I^\om_m(\ga) + \rh^*\tilde I^\om_m(\rh^* \be) &\quad\text{ if }\quad p=m\,,
\end{cases}
$$
where $\beta = h (u^m)  \al_1(0)\wedge du^m$ and 
$\gamma = \alpha - \beta$, and where $\rh:Q^m_m\to Q^m_m$ is the permutation of the last two variables $u^m$ and $u^{m-1}$.  Both $\be$ and $\ga$ have compact support. 
We also have $d \,I^\om_m(\al) = \ga  - \om \int\ga + (\rh^*)^2\be - \rh^*\om\int\rh^*\be = \al -\om\int\al$. 

If  $\al$ vanishes on $\p^{\ge 2}Q^m$, then $\al_1(u^m)$ vanishes on $\p^{\ge 2}Q^{m-1}$ in both cases. 
If $p<m$ then $\tilde I^\om_m(\al)$ vanishes on $\p Q^m$ by induction and so does $I^\om_m(\al)$. 
If $p=m$ then 
$I^{\om'}_{m-1}(\al_1(u^m))$ need not vanish on $\{u^m=0\}\cap Q^m \subseteq \p Q^m_m$, and thus 
$\tilde I^{\om}_{m}(\al)$ need not vanish everywhere on $\p Q^m_m$. 
Clearly, if $\al_1(u^m)$ vanishes on $\{u^m=0\}\cap Q^m$, then 
so does $I^{\om'}_{m-1}(\al_1(u^m))$. 
If $\al$ vanishes on $\p^{\ge 2}Q^m_m$, then so do $\be$ and $\ga$. 
Moreover, $\gamma$ vanishes on $\{u^m=0\} \cap Q^m_m$ and $\beta$ vanishes on each $\{u^i=0\} \cap Q^{m}_m$ 
for $i<m$, thus $\rh^*\be$ vanishes on $\{u^m=0\} \cap Q^m_m$.
The second summand in the definition of $\tilde I^\om_m(\al)$ causes no problems, since the integral starts at 0 in the relevant case and $\rh^*\om=-\om$.
Consequently, $I^\om_m(\al)$ vanishes on $\p Q^m_m$ if $\al$ vanishes on $\p^{\ge 2} Q^m_m$. 
This finishes the construction of $I^\om_m$ by induction. 

In order to change to another $m$-form $\tilde\om\in \Om^m_c(Q^m_p\setminus\p Q^m_p)$
with $\int_{Q^m_p}\tilde\om =1$ we put
$$
I^{\tilde\om}_m(\al) = I^\om_m(\al) - I^\om(\tilde\om)\int_{Q^m_p}\al\,. 
$$

Now we extend the operators $I^\om_m$ to the oriented manifold with corners $M$. 
We construct an oriented atlas similarly to \cite[lemme 1]{Banyaga74} with the property that all charts contain a common chart $U_0$. 
Choose $x_0\in M\setminus\p M$ and a closed neighborhood $V_0$ of $x_0$ in $M\setminus \p M$ which is diffeomorphic to a closed ball in $\mathbb R^m$.
For each $y\in M\setminus V_0$ choose an oriented open chart $\ph_y:U_y \to Q^m_{p_y}$ centered at $y$ onto some partial quadrant, $x_y\in U_y\setminus \p U_y$ and a smooth embedded curve $c_y$ in 
$M\setminus \p M$ from $x_y$ to $x_0$. Then choose a vector field $X_y$ with $X_y(c_y(t))= c_y'(t)$ for each $t$ that vanishes at $y$ and on $\p M$. 
The flow $\on{Fl}^{X_y}_t$ moves $x_y$ along $c_y$ to $x_0$ and keeps $y$ and $\p M$ fixed.
$\on{Fl}^{X_y}_1$ also maps an open neighborhood of $x_y$ in $U_y$ to an open neighborhood of $x_0$, which we may extend to an open neighborhood of $V_0$ via a   
diffeomorphism $\ps_y$ of $M$ that is the identity near $y$ and near $\p M$.  
Now consider the charts
$$
\ps_y(\on{Fl}^{X_y}_1(U_y))\East{\ps_y\i}{} \on{Fl}^{X_y}_1(U_y) \East{\on{Fl}^{X_y}_{-1}}{} U_y \East{\ph_y}{}  Q^m_{p_y}
$$
and call the resulting atlas again $(U_y,\ph_y)$.
We choose a smooth partition of unity $\la_y$  with a locally finite family of supports subordinated to this atlas (most of the $\la_y$ are 0).
Finally we choose the chart $(U_0,\ph_0)$ inside $\bigcap_y U_y$ which is possible since the intersection contains the neighborhood $V_0$.

Choose $\om\in \Om^m_c(U_0)$ with $\int_M\om =1$ and let 
\begin{align*}
I^\om(\al) :={}& \sum_{y} \ph_y^* I^{(\ph_y\i)^*\om}_m\big((\ph_y\i)^*(\la_y.\al)\big) \in \Om^{m-1}_c(M,\p M)
\quad\text{ with }
\\
dI^\om(\al) ={}& \sum_{y} \ph_y^* d I^{(\ph_y\i)^*\om}_m\big((\ph_y\i)^*(\la_y .\al)\big)
\\
={}& \sum_{y} \ph_y^* \Big((\ph_y\i)^*(\la_y.\al) -  (\ph_y\i)^*\om\cdot \int_{Q_y}(\ph_y\i)^*(\la_y.\al)\Big)
= \al-\om\int_M\al\,.
\end{align*}
The sum is finite since $\al$ has compact support.
The change to an arbitrary form $\tilde \om\in \Om^m_c(M\setminus \p M)$
with $\int\tilde\om=1$ is as above.
 \end{proof}

\begin{theorem}[Moser's theorem for manifolds with corners]
Let $M$ be a compact connected smooth manifold with corners, possibly non-orientable. 
Let $\mu_0, \mu_1 \in \Dens_+(M)$ be smooth positive densities 
 with $\int_M\mu_0 = \int_M\mu_1$.
Then there exists a diffeomorphism $\ph:M\to M$ such that 
$\mu_1= \ph^*\mu_0$.
Moreover, $\ph$ can be chosen to be the identity on $\p M$ 
if and only if $\mu_0=\mu_1$ on $\p^{\ge 2}M$.
\end{theorem}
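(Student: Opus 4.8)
The plan is to run the Moser deformation trick, using the Banyaga operator of Lemma~\ref{lemBanyaga} to solve the underlying cohomological equation with the extra control near the corners that $I^\om$ provides, and to reduce the non-orientable case to the orientable one via the orientation double cover. First assume $M$ is orientable; fix an orientation and use it to identify positive densities with positive volume forms, so that $\mu_0,\mu_1\in\Om^m(M)$ and $\mu_t:=(1-t)\mu_0+t\mu_1$ is a nowhere-vanishing $m$-form for every $t\in[0,1]$. Seek a smooth curve $t\mapsto\ph_t\in\Diff(M)$ with $\ph_0=\on{id}$ and $\ph_t^*\mu_t=\mu_0$. Differentiating, and using that $\mu_t$ has top degree (so $d\mu_t=0$ and $\L_{X_t}\mu_t=d\,i_{X_t}\mu_t$), this reduces to producing a time-dependent vector field $X_t$ with
$$
d\,i_{X_t}\mu_t=\mu_0-\mu_1 .
$$
Since $\int_M(\mu_0-\mu_1)=0$, Lemma~\ref{lemBanyaga} applied to $\al=\mu_0-\mu_1$ (with any fixed $\om\in\Om^m_c(M\setminus\p M)$ of total integral $1$; recall $M$ is compact, so supports are automatically compact) yields $\be:=I^\om(\mu_0-\mu_1)\in\Om^{m-1}(M,\p M)$ with $d\be=\mu_0-\mu_1$; crucially $\be$ is independent of $t$. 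Define $X_t$ by $i_{X_t}\mu_t=\be$: this is well defined and smooth in $(t,x)$ because $\mu_t$ is nowhere zero, and $X_t\in\X(M,\p M)$ because $\be\in\Om^{m-1}(M,\p M)$, so the flow of $X_t$ preserves $\p M$ and every stratum $\p^qM$. By compactness of $M$ this flow exists for all $t\in[0,1]$; put $\ph:=\ph_1\i$. Then $\mu_1=\ph^*\mu_0$, and $\ph$ restricts to diffeomorphisms of $\p M$ and of each $\p^qM$.

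For the last assertion, first suppose $\mu_0=\mu_1$ on $\p^{\ge2}M$. Then $\mu_0-\mu_1$ vanishes at every point of $\p^{\ge2}M$, so by the second property in Lemma~\ref{lemBanyaga} the form $\be=I^\om(\mu_0-\mu_1)$ vanishes at every point of $\p M$; hence $i_{X_t}\mu_t=\be$ vanishes there, and since $\mu_t$ is nonzero we get $X_t|\p M=0$ for all $t$. Thus the flow fixes $\p M$ pointwise and $\ph=\ph_1\i$ is the identity on $\p M$. Conversely, suppose $\ph|\p M=\on{id}$ and let $x\in\p^qM$ with $q\ge2$. Then $\ph$ fixes $x$ and fixes pointwise the $q\ge2$ distinct coordinate hyperplanes through $x$, whose tangent spaces span $T_xM$; hence $T_x\ph=\on{id}$, the Jacobian of $\ph$ at $x$ equals $1$, and therefore $(\mu_1)_x=(\ph^*\mu_0)_x=(\mu_0)_x$. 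So $\mu_0=\mu_1$ on $\p^{\ge2}M$. This is exactly the obstruction pointed out in the introduction.

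It remains to drop the orientability hypothesis. Here the plan is to pass to the orientation double cover $\pi:\hat M\to M$, which is a compact connected oriented manifold with corners carrying a fixed-point-free orientation-reversing involution $\si$ with $M=\hat M/\si$ and $\p^q\hat M=\pi\i(\p^qM)$. After fixing an orientation of $\hat M$ the densities $\mu_i$ pull back to positive volume forms $\hat\mu_i$ with $\si^*\hat\mu_i=-\hat\mu_i$ and $\int_{\hat M}\hat\mu_0=2\int_M\mu_0=2\int_M\mu_1=\int_{\hat M}\hat\mu_1$. Run the construction above on $\hat M$, but replace the Banyaga primitive by its anti-invariant part: with $\be_0:=I^{\hat\om}(\hat\mu_0-\hat\mu_1)$ set $\hat\be:=\tfrac12(\be_0-\si^*\be_0)$, which again lies in $\Om^{m-1}(\hat M,\p\hat M)$, satisfies $d\hat\be=\hat\mu_0-\hat\mu_1$ (because $\si^*(\hat\mu_0-\hat\mu_1)=-(\hat\mu_0-\hat\mu_1)$), and is anti-invariant, $\si^*\hat\be=-\hat\be$. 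Defining $\hat X_t$ by $i_{\hat X_t}\hat\mu_t=\hat\be$ one checks $\si_*\hat X_t=\hat X_t$, so the flow of $\hat X_t$ commutes with $\si$ and descends to a flow $\ph_t$ on $M$; then $\ph:=\ph_1\i$ satisfies $\mu_1=\ph^*\mu_0$ as before. The last assertion also descends: if $\mu_0=\mu_1$ on $\p^{\ge2}M$ then $\hat\mu_0-\hat\mu_1$ vanishes on $\p^{\ge2}\hat M$, so $\be_0$, $\si^*\be_0$ and hence $\hat\be$ vanish on $\p\hat M$, whence $\hat X_t|\p\hat M=0$ and $\ph|\p M=\on{id}$; the converse is the corner argument above, which is intrinsic to $M$.

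I expect the main difficulty to be not any single computation but the simultaneous bookkeeping of the two refinements the Banyaga operator is designed to deliver: the generating field must be tangent to \emph{every} boundary stratum (so the Moser flow respects the whole corner structure) and must \emph{vanish} on $\p M$ precisely when $\mu_0=\mu_1$ on $\p^{\ge2}M$; both are encoded in the two bullet points of Lemma~\ref{lemBanyaga}, and in the non-orientable case one must additionally check that $\si$-averaging preserves the space $\Om^{m-1}(\hat M,\p\hat M)$ and the vanishing on $\p\hat M$. Compactness of $M$ is used only to guarantee that the Moser flow is defined on all of $[0,1]$.
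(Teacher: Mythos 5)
Your proposal is correct and follows essentially the same route as the paper: the Moser deformation trick with the time-independent primitive $I^\om(\mu_1-\mu_0)$ from Lemma~\ref{lemBanyaga} supplying both tangency of the generating field to all strata and its vanishing on $\p M$ when $\mu_0=\mu_1$ on $\p^{\ge 2}M$, and the orientation double cover with the $\ta^*$-anti-invariant average of the primitive in the non-orientable case. Your explicit proof of the ``only if'' direction (identity on two codimension~1 tangent hyperplanes at a corner of index $\ge 2$ forces $T_x\ph=\on{id}$, hence Jacobian $1$) is exactly the observation the paper makes in its introduction.
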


\begin{proof} 
We first prove the theorem for oriented $M$. 
In this case $\on{Dens}_+(M)$ equals the space $\Om^m_+(M)$ of positive $m$-forms for $m=\dim(M)$.
Put $\mu_t:=\mu_0+t(\mu_1-\mu_0)$ for $t\in[0,1]$; then each $\mu_t$ 
is a volume form on $M$ since these form a convex set. 
We look for a curve of diffeomorphisms, $t\mapsto \ph_t$, with 
$\ph_t^*\mu_t=\mu_0$; this curve has to satisfy $\frac{\partial}{\partial t}(\ph_t^*\mu_t)=0$.
Since $\int_{M}(\mu_1-\mu_0)=0$, we have 
$[\mu_1-\mu_0]=0\in H^m(M)$. 
Fix $\om \in \Om^m_c(M \setminus \p M)$ with $\int \om = 1$. Using lemma \ref{lemBanyaga} we have 
\begin{align*}
\ps :={}& I^\om(\mu_1-\mu_0) \in \Om^{m-1}(M,\p M)\quad\text{ with }
\\
d\ps ={}& d I^\om(\mu_1-\mu_0) = \mu_1-\mu_0 - \om\int_M( \mu_1-\mu_0) = \mu_1-\mu_0\,.
\end{align*}
Put $\et_t:=(\frac{\partial}{\partial t}\ph_t)\o \ph_t\i$; then by well known formulas
(see \cite[31.11]{Mic2008}, e.g.) we have:
\begin{align*}
0 &\AMSoverset\text{wish}\to= \tfrac{\partial}{\partial t}(\ph_t^*\mu_t) 
     = \ph_t^*\mathcal{L}_{\et_t}\mu_t + \ph_t^*\tfrac{\partial}{\partial t}\mu_t 
     = \ph_t^*(\mathcal{L}_{\et_t}\mu_t + \mu_1 - \mu_0)\,, \\
0 &\AMSoverset\text{wish}\to= \mathcal{L}_{\et_t}\mu_t + \mu_1 - \mu_0 
     = d i_{\et_t}\mu_t + i_{\et_t}d\mu_t + d\ps 
     = d i_{\et_t}\mu_t + d\ps\,. 
\end{align*}
We can choose $\et_t$ uniquely by requiring that $i_{\et_t}\mu_t=-\ps$, since $\mu_t$ 
is non-degenerate for all $t$. The time dependent vector field $\et_t$ is tangent to each boundary stratum $\p^q M$, 
since $\ps\in \Om(M,\p M)$.
Then the evolution operator 
$\ph_t=\Ph^\et_{t,0}$ 
exists for $t\in [0,1]$ since $M$ is compact, by  \cite[3.30]{Mic2008}. 
Moreover, $\ph_t:\p M\to\p M$. 
Thus $\ph_t$ restricts to a diffeomorphism of $M$ for each $t$.
On $M$ we have, using  \cite[31.11.2]{Mic2008}, 
\begin{equation*}
\tfrac{\partial}{\partial t}(\ph_t^*\mu_t) 
     = \ph_t^*(\mathcal{L}_{\et_t}\mu_t + d\ps) 
     = \ph_t^*(d i_{\et_t}\mu_t + d\ps) = 0\,,
\end{equation*}
so $\ph_t^*\mu_t= \text{ constant }= 
\mu_0$.
If $\mu_0=\mu_1$ on $\p^{\ge 2}M$, then $\ps = I^\om(\mu_1-\mu_0)$ vanishes on $\p M$ by lemma  \ref{lemBanyaga} and hence so does $\et_t$, thus $\ph_t$ is the identity there.

If $M$ is not orientable, we let
$p:\on{or}(M)\to M$ be the 2-sheeted orientable double cover of $M$:
It is the $\mathbb Z_2$-principal bundle with cocycle of transition functions 
$ \on{sign} \det d(u_\be\o u_\al\i)(u_\al(x))$
where $(U_\al,u_\al)$ is a smooth atlas for $M$. Each connected (thus orientable) 
chart of $M$ appears twice as chart of $\on{or}(M)$, once with each orientation.
Thus $\on{or}(M)$ is again a smooth manifold with corners. 
Let $\ta:\on{or}(M)\to \on{or}(M)$ be the orientation reversing deck-transformation; 
see \cite[13.1]{Mic2008}. Pullback $p^*:\Om(M)\to \Om(\on{or}(M))$ is an isomorphism onto the eigenspace 
$\Om(\on{or}(M))^{\ta^*=1}$ of $\ta^*$ with eigenvalue $1$.
The space $\on{Dens}_+(M)$ of positive smooth densities on $M$ is via $p^*$ isomorphic to the space 
of positive $m$-forms in the eigenspace $\Om^m(\on{or}(M))^{\ta^*=-1}$ of $\ta^*$ with eigenvalue $-1$; these are the `formes impaires' of de~Rham.  Note the abuse of notation here: $p^*$ of a density differs (by local signs) from $p^*$ of a form. 
See \cite[13.1 and 13.3]{Mic2008} for more details.

We consider the pullback densities $p^*\mu_t$ as positive $m$-forms denoted by $\nu_t$ on $\on{or}(M)$ 
which satisfy $\ta^*\nu_t= -\nu_t$, and for $\om\in\Om^m_c(\on{or}(M)\setminus \p\on{or}(M))$ with $\int_{\on{or}(M)}\om =1$ we choose 
\begin{align*}
\tilde\ps &= I^{\om}(\nu_1-\nu_0)\in \Om^{m-1}(\on{or}(M),\p \on{or}(M))
\quad\text{ which satisfies }
\\
d\tilde\ps &=\nu_1-\nu_0 - \om\cdot \int_{\on{or}(M)}(\nu_1-\nu_0) = \nu_1-\nu_0\,.
\end{align*} 
Let $\ps=\frac12\tilde\ps -\frac12\ta^*\tilde\ps$, then  again $d\ps = \nu_1-\nu_0$ and now also $\ta^*\ps = -\ps$. 
A time-dependent vector field $\et_t$ is uniquely given by $i_{\et_t}\nu_t = -\ps$.
\begin{equation*}
i_{\ta^*\et_t}\nu_t = - i_{\ta^*\et_t}\ta^*\nu_t = -\ta^*(i_{\et_t}\nu_t) = \ta^*\ps = -\ps 
\quad\implies\quad \ta^*\et_t = \et_t\,.
\end{equation*}
In particular, the time dependent vector field $\et_t$ is tangent to each boundary stratum $\p^q\on{or}(M)$, and it
projects to a time dependent vector field on $M$ whose evolution gives the curve of diffeomorphisms with all required properties. 
\end{proof}

\subsection{Cohomological interpretation of the Banyaga operator} 
For a connected oriented manifold with corners $M$ of dimension $m$ (we assume that $\p M$ is not empty) we consider the following diagram where only the dashed arrow $I^\om$ does not fit in commutingly. 
Here $\om\in\Om^m_c(M\setminus \p M)$ is a fixed form with $\int\om = 1$.
All instances of $\mathbb R$ in the diagram are connected by identities which fit commutingly into the diagram.
Each line is the definition of  the corresponding top de~Rham cohomology space. 
The integral in the first line induces an isomorphism in cohomology since 
$M\setminus \p M$ is a connected oriented open manifold.
The bottom triangle commutes by Stokes' theorem \ref{Stokes}.
$$
\xymatrix{
\Om^{m-1}_c(M\setminus \p M) \ar[r]^{d} \ar@{^{(}->}[d] & 
\Om^m_c(M\setminus \p M) \ar@/^1.5pc/[rr]^{\int_{M\setminus \p M}} \ar@{->>}[r]  \ar@{^{(}->}[d] &
H^m_c(M\setminus \p M) \ar@{=}[r]  \ar[d]& \mathbb R
\\
\Om^{m-1}_c(M, \p M) \ar[r]^{d} \ar@{^{(}->}[d] & 
\Om^m_c(M,\p M) \ar@{->>}[r] \ar@{=}[d] \ar@/_1.5pc/[rr]_{\qquad\qquad\qquad\qquad\int_M} &
H^m_c(M,\p M) \ar@{=}[r] \ar[d]&  \mathbb R
\\
\Om^{m-1}_c(M) \ar[r]^{d} \ar[dr]_{\int_{\p^1 M}\o j_{\p^1 M}^*} & 
\Om^m_c(M)  \ar@{->>}[r] \ar[d]^{\int_M}  \ar@{-->}[ul]_{I^\om} &
H^m_c(M) \ar@{=}[r]& 0
\\
& \mathbb R  
}
$$

\noindent{\bf Claim.} {\it $\int_M:\Om^m_c(M,\p M)=\Om^m_c(M) \to \mathbb R$ induces $H^m_c(M,\p M)=\mathbb R$.} \\
Namely, given $\al, \be \in \Om^m_c(M, \p M)$ with $\int \al = \int \be$, we have $\al - dI^\om(\al) = \om \int_M \al$ and similarly for $\be$. This implies $\al - \be = d\left(I^\om(\al) - I^\om(\be)\right)$ and hence $[\al] = [\be]$ in $H^m_c(M,\p M)$.
 
\noindent{\bf Claim.} {\it If $M$ has non-empty boundary then 
$H^m_c(M)=0$.} \\ 
For any form $\al\in\Om^m_c(M)$ we have $\al - dI^\om(\al)= \om\int_M\al$, so $\al$ equals a multiple of $\om$ modulo an exact form with compact support. 
Now choose $\be\in \Om^{m-1}_c(M)$ with $\int_M d\be\ne 0$; for example with $\int_{\p^1 M}j_{\p^1M}^*\be \ne 0$. 
Then $d\be - dI^\om(d\be) = \om \int_M d\be$ shows that any multiple of $\om$ is exact. Thus $H^m_c(M)=0$.

\begin{theorem}[Poincar\'e--Lefschetz duality]
For an oriented connected manifold with corners of dimension $m$ the cohomological integral 
$\int_* : H^m_c(M,\p M)\to \mathbb R$ induces a non-degenerate bilinear form
\begin{align*}
&P^k_M: H^k(M) \x H^{m-k}_c(M,\p M) \to \mathbb R\qquad\text{ given by }
\\&
P^k_M ([\al],[\be]) = \int_*[\al]\wedge [\be] = \int_M\al\wedge \be\,.
\end{align*}
\end{theorem}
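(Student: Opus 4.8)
The plan is to run the standard Mayer–Vietoris / induction argument for Poincaré duality, but carefully tracking the boundary condition on the second factor, and to use the Banyaga operator $I^\om$ (via the two claims just proved) as the base case of the induction, replacing the usual appeal to $H^m_c(\R^m)=\R$. First I would set up the pairing: for $[\al]\in H^k(M)$ and $[\be]\in H^{m-k}_c(M,\p M)$, the form $\al\wedge\be$ has compact support and pulls back to $0$ on every boundary stratum (because $\be$ does), so $\al\wedge\be\in\Om^m_c(M,\p M)$ and $\int_M\al\wedge\be$ makes sense; Stokes' theorem \ref{Stokes} shows it vanishes when either class is exact (if $\al=d\al'$ then $\al\wedge\be=d(\al'\wedge\be)$ up to sign and $j_{\p^1M}^*(\al'\wedge\be)=0$; if $\be=d\be'$ with $\be'\in\Om_c(M,\p M)$, similarly), so $P^k_M$ is well defined. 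By the first Claim, $\int_*:H^m_c(M,\p M)\to\R$ is an isomorphism, so it suffices to show the adjoint map $H^k(M)\to H^{m-k}_c(M,\p M)^*$ is an isomorphism.

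Next I would prove the local/base case. For $M$ a partial quadrant $Q^m_p$ (or more simply: for $M=\R^m$ or $M$ convex), $H^k(M)=\R$ for $k=0$ and $0$ otherwise, while $H^{m-k}_c(M,\p M)$ should be $\R$ for $k=0$ and $0$ otherwise; the nonvanishing case $k=0$ is exactly the content of the two Claims together with the well-definedness above — $\int_M$ identifies $H^m_c(M,\p M)$ with $\R$ and the constant function $1\in\Om^0(M)=H^0(M)$ pairs with it nontrivially. The vanishing of $H^j_c(M,\p M)$ for $j<m$ on a quadrant I would get from the Banyaga operator as well: given a closed $\al\in\Om^j_c(Q^m_p,\p Q^m_p)$ with $j<m$, one needs a compactly supported primitive vanishing on the boundary; here I would either invoke a relative compactly-supported Poincaré lemma for quadrants (provable by the same de Rham homotopy/induction scheme used to build $I^\om_m$ in Lemma \ref{lemBanyaga}, which is already "boundary-aware"), or reduce to the known computation of $H^*_c$ of a half-space.

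Then I would globalize by a Mayer–Vietoris induction over a finite good cover (finite because $M$ is compact — for the non-compact strata one argues with a cover by quadrant charts and takes direct limits as usual). Both $H^*(-)$ and $H^{m-*}_c(-,\p-)$ satisfy Mayer–Vietoris sequences, the pairing $P^*$ induces a map of long exact sequences (commuting up to the usual signs), and the Five Lemma propagates the isomorphism from chart-sized pieces and their intersections to all of $M$. The one subtlety, which is the main obstacle, is bookkeeping the boundary strata in the inductive pieces: a chart $U$ on $M$ is itself a manifold with corners whose own boundary $\p U$ has a piece lying in $\p M$ and a piece interior to $M$, and the relative complex $\Om_c(U,\p M\cap U)$ rather than $\Om_c(U,\p U)$ is what appears in the Mayer–Vietoris sequence for $H^{m-*}_c(M,\p M)$; so one must set up the whole induction with the "partial" relative complexes $\Om_c(-,S)$ for $S$ a union of strata, check that the Banyaga-type construction and the local computation go through for these, and verify that $\int_M\al\wedge\be$ still lands in the right relative space at each stage. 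Once this relative-complex formalism is in place, the Five Lemma argument is routine and finishes the proof.
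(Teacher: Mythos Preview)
Your proposal is correct and follows essentially the same route as the paper: the paper merely observes that $\Om_c(M,\p M)$ is a graded ideal in $\Om(M)$ (your well-definedness step) and then refers to the standard Mayer--Vietoris/Five-Lemma proof of Poincar\'e duality in \cite[12.14--12.16]{Mic2008} ``with some obvious changes,'' which is exactly the induction over a good cover you outline. You have in fact gone further than the paper by spelling out what those ``obvious changes'' are---most notably the need to work with the partial relative complexes $\Om_c(U,\p M\cap U)$ rather than $\Om_c(U,\p U)$ on chart domains, and the corresponding base-case computation on partial quadrants---so your sketch is more complete than the paper's own proof.
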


This is in fact the special case for real coefficients of Lefschetz' duality \cite{Lefschetz26}.
In \cite{Valette14} Lefschetz duality is proven 
for piecewise linear stratified $\p$-pseudomanifolds in terms of intersection homology.
Here we can give a proof based completely on differential forms.

\begin{proof}
Note first that $\Om_c(M,\p M)$ is a graded ideal in $\Om(M)$, thus the integral makes sense. 
The proof follows now, for example, \cite[12.14 -- 12.16]{Mic2008} with some obvious changes.
\end{proof}


\end{document}